\definecolor{subsectioncolor}{rgb}{0,0.541,0.855}
\def\journalname{IEEE Transactions on Automatic Control}
\def\BibTeX{{\rm B\kern-.05em{\sc i\kern-.025em b}\kern-.08em
    T\kern-.1667em\lower.7ex\hbox{E}\kern-.125emX}}
\newcommand{\R}{\mathbb{R}}                                     
\newcommand{\N}{{\mathbb{N}}}                                   
\newcommand{\ddt}{\tfrac{\text{\normalfont d}}{\text{\normalfont d}t}} 
\newcolumntype{C}[1]{>{\centering\let\newline\\\arraybackslash\hspace{0pt}}m{#1}}
\DeclareMathOperator{\sgn}{sgn}
\DeclareMathOperator{\erf}{erf}
\newcommand{\ds}[1]{{\rm \, d} #1 \,}
\newtheorem{theorem}{Theorem}[section]
\newtheorem{assumption}[theorem]{Assumption}
\newtheorem{remark}[theorem]{Remark}
\newcommand{\cD}{\mathcal{D}}
\newcommand{\cF}{\mathcal{F}}
\newcommand{\eps}{\varepsilon}
\newcommand{\setdef}[2]{\left\{\, #1 \left|\, \vphantom{#1} #2\right.\right\}}
\DeclareOldFontCommand{\rm}{\normalfont\rmfamily}{\mathrm}
\renewcommand*\env@matrix[1][*\c@MaxMatrixCols c]{%
  \hskip -\arraycolsep
  \let\@ifnextchar\new@ifnextchar
  \array{#1}}
\def\blfootnote{\gdef\@thefnmark{}\@footnotetext}
\begin{document}

\title{String stability and guaranteed safety via funnel cruise control for vehicle platoons}
\author{Thomas Berger\thanks{Funded by the Deutsche Forschungsgemeinschaft (DFG, German
Research Foundation) -- Project-ID 524064985.} and Bart Besselink
\thanks{Thomas Berger is with the Universit\"at Paderborn, Institut f\"ur Mathematik, Warburger Str.~100, 33098~Paderborn, Germany (e-mail: thomas.berger@math.upb.de).}
\thanks{Bart Besselink is with the Bernoulli Institute for Mathematics, Computer Science and Artificial Intelligence, University of Groningen, Nijenborgh 9, 9747 AG Groningen, Netherlands (e-mail: b.besselink@rug.nl).}}

\maketitle

\begin{abstract}
We study decentralized control strategies for  platoons of autonomous vehicles with heterogeneous and nonlinear dynamics. Based on ideas from funnel control, we present a novel decentralized control algorithm which is able to guarantee a safety distance between any two vehicles, a good traffic flow and it achieves string stability of the controlled platoon. We illustrate the performance of the controller by simulations of two extreme scenarios.
\end{abstract}

\begin{IEEEkeywords}
vehicle platoons, autonomous driving, guaranteed safety, string stability, decentralized control, funnel control.
\end{IEEEkeywords}

\section{Introduction}\label{sec:introduction}

\IEEEPARstart{T}{o} optimize the traffic flow on highways~\cite{HedrTomi94} and to reduce the fuel consumption by minimizing air drag~\cite{BonnFrit00}, the aggregation of autonomous vehicles in platoons seems favorable. To guarantee the safety of individual vehicles despite the small distances in a platoon, an automated control is necessary, for which this property can be proved in a rigorous way.

Research on vehicle platooning has a long history, see \cite{LeviAtha66,Pepp74} for early works, and has received considerable attention in the literature, see \cite{varaiya_1993,IoanChie93,SwarHedr99,AlamBess15,ploeg_2014b,zheng_2016} for a selection of key contributions and \cite{guanetti_2018,ersal_2020} for recent surveys.
Due to the limited capacities of communication between the vehicles in a platoon, most classical studies focussed on decentralized control techniques, see e.g.~\cite{Pepp74,Lunz92}, where each vehicle only has the local information of the distance to the preceding vehicle and its own velocity. Sometimes, additional information such as the velocity of the leader vehicle is required, see~\cite{PeteMidd14,SeilPant04}, but it is difficult to communicate this global information to each member of the platoon. Furthermore, decentralized controllers are cost efficient since no expensive hardware for communication must be installed; they are reliable, since a failure of the hardware in a single vehicle does not threaten the whole platoon; they are flexible, since vehicles in the platoon can easily change their positions and additional vehicles can be integrated; etc. Different from decentralized control, \textit{distributed} control is especially popular in the model predictive control literature~\cite{dunbar_2006,CampJia02}, however a certain exchange of information is required there. In this work, we focus on the development of a \textit{decentralized} control strategy, i.e.\ we assume that there is no communication between neighboring vehicles (and controllers).

A crucial property for controlled platoons is string stability, which essentially expresses that disturbances are not amplified when propagated through the platoon. In particular, a deceleration of the leader vehicle should not lead to a so called ghost traffic jam: Each follower decelerates a bit more than it's preceding vehicle so that eventually the platoon comes to a standstill. In~\cite{SwarHedr96} string stability is introduced as Lyapunov stability of the origin, when we interpret the platoon as an interconnected system. In the case of linear vehicle dynamics, string stability can be characterized in terms of the transfer functions of each member of the platoon~\cite{PeteMidd14,SeilPant04}. However, it was shown in~\cite{SeilPant04} that with linear controllers, using only local information and a constant spacing policy, it is impossible to achieve string stability, see \cite{wijnbergen_2020} for related results. Therefore, some approaches focussed on additionally allowing for communication between the vehicles, leading to the concept of cooperative adaptive cruise control (CACC)~\cite{NausVugt10,OencPloe14}. A drawback is that the advantages of a decentralized control (cost efficiency, reliability, flexibility, etc.) are lost then. Therefore, in the present paper we focus on the alternative of a nonlinear controller and in doing so we also allow for nonlinear vehicle dynamics.

In the case of nonlinear models, alternatives to the frequency domain approach are requisite. An appropriate concept, which also incorporates the influence of external disturbances, is disturbance string stability introduced in~\cite{BessJoha17,besselink_2018}. Essentially, this is a uniform (with respect to the vehicle index) input-to-state stability, cf.~\cite{Sont89a}. In the literature, a couple of other modifications of string stability are available, see~\cite{FengZhan19} for an overview. In the present paper, we introduce a practical version of disturbance string stability for the velocities of the vehicles, which measures the effect of variations of the leader velocity on the velocities of the other vehicles in the platoon.

In order to satisfy the requirements on the safety of the vehicles in the platoon, we develop a novel control design which uses ideas from funnel control. The concept of funnel control was developed in the seminal work~\cite{IlchRyan02b} (see also the recent survey in~\cite{BergIlch21}) and proved advantageous in a variety of applications such as control of industrial servo-systems~\cite{Hack17}, underactuated multibody systems~\cite{BergDrue21,DrueLanz24}, electrical circuits~\cite{BergReis14a,SenfPaug14}, peak inspiratory pressure~\cite{PompWeye15} and a moving water tank~\cite{BergPuch22}. Funnel control for the case of two vehicles following each other has been considered in~\cite{BergRaue18,BergRaue20} and the \textit{funnel cruise controller} has been introduced to guarantee a safe following; a related work can be found in~\cite{verginis_2018}. A drawback of these approaches is that, when implemented in each vehicle of a platoon, they do not achieve string stability. A control strategy guaranteeing both a prescribed performance and string stability is developed in~\cite{GuoLi19}, however a lumped tracking error (a linear combination of position and velocity differences) is considered, from the prescribed performance of which safety of each vehicle cannot be inferred. Furthermore, string stability is only shown for this lumped tracking error and each vehicle requires knowledge of the position and velocity of the leader vehicle, thus necessitating reliable inter-vehicle communication.

In the present work, we present a new control design, which is different from the funnel cruise controller (and related approaches) and achieves the objectives: guaranteed safety distance between any two vehicles, good traffic flow, string stability of the controlled platoon, and decentralized implementation based on local information.

\subsection{Nomenclature}

In the following let $\N$ denote the natural numbers, $\N_0 = \N \cup\{0\}$, and $\R_{\ge 0} =[0,\infty)$. By $\|x\|$ we denote the Euclidean norm of $x\in\R^n$. For some interval $I\subseteq\R$, some $V\subseteq\R^m$ and $k\in\N$, $L^\infty(I, \R^{n})$ is the Lebesgue space of measurable, essentially bounded functions $f\colon I\to\R^n$, $W^{k,\infty}(I, \R^{n})$ is the Sobolev space of all functions $f:I\to\R^n$ with $k$-th order weak derivative $f^{(k)}$ and $f,f^{(1)},\ldots,f^{(k)}\in L^\infty(I, \R^{n})$, and $C^k(V,  \R^{n})$ is the set of $k$-times continuously differentiable functions $f: V \to \R^{n}$, with $C(V,  \R^{n}) := C^0(V, \R^{n})$.

\subsection{Vehicle dynamics}

\begin{figure*}
	\begin{center}
\resizebox{\textwidth}{!}{
		\begin{tikzpicture}
            \node (vehN) at (-100mm,0mm) [rectangle,draw=none,inner sep=0mm] {\scalebox{-1}[1]{\includegraphics[width=34mm]{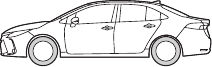}}};
			\node (veh1) at (-40mm,0mm) [rectangle,draw=none,inner sep=0mm]
            {\scalebox{-1}[1]{\includegraphics[width=34mm]{toyota_toleft.pdf}}};
			\node (veh0) at ( 24mm,0mm) [rectangle,draw=none,inner sep=0mm]
            {\scalebox{-1}[1]{\includegraphics[width=34mm]{toyota_toleft.pdf}}};

            \draw (vehN.south west) -- (veh0.south east);
            \node (dots) at (vehN.east) [inner sep=0mm, xshift = 15mm] {$\cdots$};
            \node (help1) at (veh0.west) [inner sep=0mm, xshift = -13mm] {};
            \draw[dotted,line width=0.8pt] (help1) -- +(0mm,-5mm);
            \draw [<->,line width=0.6pt] (help1)--(veh0.west) node[midway, above]{$d_{\min}$};

            \node (help2) at (veh0.west) [inner sep=0mm, yshift = 10mm] {};
            \node (help3) at (veh1.west) [inner sep=0mm, xshift = -5mm, yshift = 10mm] {};
            \draw [<->,line width=0.6pt] (help2)--(help3) node[midway, above]{$d_{\max}$};
            \draw[dotted,line width=0.8pt] (help2) -- +(0mm,-16mm);
            \draw[dotted,line width=0.8pt] (help3) -- +(0mm,-16mm);

            \draw[->,line width=0.6pt] (vehN.east) -- +(8mm,0mm)  node[midway, above]{$v_N(t)$};
            \draw[->,line width=0.6pt] (veh1.east) -- +(8mm,0mm)  node[midway, above]{$v_1(t)$};
            \draw[->,line width=0.6pt] (veh0.east) -- +(8mm,0mm)  node[midway, above]{$v_0(t)$};

            \node (help4) at (vehN.south west) [inner sep=0mm, xshift = -12mm, yshift = -5mm] {};
            \node (help5)  [below of = help4, inner sep=0mm,  yshift = 5mm] {};
            \node (help6)  [below of = help5, inner sep=0mm, yshift = 5mm] {};
            \draw[dotted,line width=0.8pt] (vehN.south west) -- +(0mm,-5mm);
            \draw[dotted,line width=0.8pt] (veh1.south west) -- +(0mm,-10mm);
            \draw[dotted,line width=0.8pt] (veh0.south west) -- +(0mm,-15mm);

            \draw[->,line width=0.6pt] (help4) -- +(12mm,0mm)  node[midway, above]{$x_N(t)$};
            \draw[->,line width=0.6pt] (help5) -- +(72mm,0mm)  node[midway, above]{$x_1(t)$};
            \draw[->,line width=0.6pt] (help6) -- +(136mm,0mm)  node[pos=0.7, above]{$x_0(t)$};
            \draw[line width=0.6pt] (help4.south east) -- +(0mm,1mm); \draw[line width=0.6pt] (help4.north east) -- +(0mm,-1mm);
            \draw[line width=0.6pt] (help5.south east) -- +(0mm,1mm); \draw[line width=0.6pt] (help5.north east) -- +(0mm,-1mm);
            \draw[line width=0.6pt] (help6.south east) -- +(0mm,1mm); \draw[line width=0.6pt] (help6.north east) -- +(0mm,-1mm);

		\end{tikzpicture}
}
	\end{center}
	\caption{Framework for the control of a vehicle platoon}
\label{Fig:platoon}
\end{figure*}

Consider a platoon of $N$ heterogeneous vehicles\footnote{In this work, the leader vehicle is not counted as a member of the platoon. It assumes a special role, without any dynamics describing its behavior.} with nonlinear dynamics
\begin{equation}\label{eq:Sys}
\begin{aligned}
    \dot x_i(t) &= v_i(t),\\
    m_i \dot{v}_i(t) &= u_i(t) - f_i(t,x_i(t),v_i(t)) + d_i(t),\quad i=1,\ldots,N,
\end{aligned}
\end{equation}
where $x_i(t)\in\R$ and $v_i(t)\in\R$ denote the position and velocity, respectively, of vehicle $i$ at time $t\geq0$, see also Fig.~\ref{Fig:platoon}. Moreover, $m_i$ (in \si{\kilo\gram}) is the mass of vehicle~$i$, $d_i\in L^\infty(\R_{\ge 0},\R)$ is a bounded disturbance (capturing modelling errors, uncertainties and noises), and the nonlinear function $f_i$ is the sum of the forces due to gravity~$F_{i,g}$, the aerodynamic drag~$F_{i,a}$ and the rolling friction~$F_{i,r}$, that is
\[
    f_i(t,x,v) = F_{i,g}(x) + F_{i,a}(t,x,v) + F_{i,r}(v),
\]
with
\begin{align*}
  F_{i,g}:&\ \R\to\R,\ x\mapsto m_i\, g \sin \theta_i(x),\\
  F_{i,a}:&\ \R_{\ge 0}\times \R\times \R\to\R,\\
  & (t,x,v)\mapsto \tfrac12 \rho_i(t,x) C_{i,d}\, A_i \sgn(v) v^2,\\
  F_{i,r}:&\ \R\to\R,\ v\mapsto m_i\, g\, C_{i,r} \sgn(v).
\end{align*}
Here, $g = \SI{9.81}{\metre\per\second^2}$ is the acceleration of gravity,~$\theta_i(x)\in [-\tfrac{\pi}{2}, \tfrac{\pi}{2}]$ (in \si{\radian}) and $\rho_i(t,x)$ (in \si{\kilo\gram\per\metre^3}) denote the slope of the road and the (bounded) density of air at time~$t$ and location~$x$ for vehicle~$i$, resp.,~$C_{i,d}$ denotes the (dimensionless) shape-dependent aerodynamic drag coefficient,~$C_{i,r}$ the (dimensionless) coefficient of rolling friction, and~$A_i$ (in \si{\metre^2}) the frontal area of vehicle~$i$, respectively. The control input~$u_i$ of each vehicle is the force resulting from the contact of the wheels with the road and generated by the engine of the vehicle.

A discontinuous rolling friction causes problems in the theoretical treatment. Therefore, we approximate the~$\sgn$ function in~$F_{i,r}$ by the smooth error function
\[
    \erf(z) = \frac{2}{\sqrt{\pi}} \int_0^z e^{-t^2}\ds{t},\quad z\in\R,
\]
using that $\lim_{\alpha\to \infty} \erf(\alpha z) = \sgn(z)$ for all $z\in\R$. We will thus use the following model for the rolling friction:
\begin{equation}\label{eq:friction}
     F_{i,r}:\R\to\R,\ v\mapsto m_i\, g\, C_{i,r} \erf(\alpha v)
\end{equation}
for sufficiently large parameter~$\alpha>0$. For other friction models see~\cite{ArmsDupo94,LeinNijm04}.

The initial conditions for~\eqref{eq:Sys} are
\begin{equation}\label{eq:IC}
  x_i(0) = x_i^0\in\R,\ \ v_i(0) = v_i^0\in\R,\ \ x_i^0 < x_{i-1}^0,\ i=1,\ldots,N,
\end{equation}
where $x_0^0 = x_0(0)\in\R$ is the initial position of the leader vehicle which has a position trajectory $x_0 \in C^2(\R_{\ge 0},\R)$ and we set $v_0 := \dot x_0$.

\subsection{Control objective}

The control objective is to
\begin{enumerate}[(O1)]
  \item guarantee a safety distance between any two vehicles,
  \item ensure a good traffic flow (distances between vehicles don't get too large),
  \item achieve string stability of the controlled platoon,
  \item only use decentralized controllers based on local information.
\end{enumerate}

These objectives can be formalized in the following way. Let a desired safety distance $d_{\min}>0$ and a desired maximal distance $d_{\max}>0$ of the vehicles be given, then it should hold that
\[
    \forall\, i=1,\ldots,N\ \forall\, t\ge 0:\ d_{\min} < x_{i-1}(t) - x_i(t) < d_{\max}.
\]
This ensures objectives~(O1) and~(O2) and is also illustrated in Fig.~\ref{Fig:platoon}. For string stability as in~(O3) we view the velocity profile~$v_0$ of the leader vehicle as the disturbance and require that the velocity of any other vehicle is linearly bounded by~$v_0$, uniformly in~$i=1,\ldots,N$ and independent of the platoon length~$N$. Additionally, we allow for an offset, which is also uniform in~$i$ and~$N$, thus calling this property \textit{practical velocity string stability}:
\begin{multline}\label{eq:string-stab}
\exists\, C_1,C_2>0\ \forall\, N\in\N\ \forall\, i=1,\ldots,N\ \forall\, v_0\in L^\infty(\R_{\ge 0},\R):\quad \\
 \|v_i\|_\infty \le C_1 + C_2 \|v_0\|_\infty.
\end{multline}
Here~$v_i$ denotes the solution of~\eqref{eq:Sys} under an appropriate feedback~$u_i$ and for relevant classes of initial conditions and external disturbances, all of  which will be defined below.

One may wonder why only a linear estimate for~$v_i$ in terms of~$v_0$ is required here, while the dynamics~\eqref{eq:Sys} are nonlinear. The reason is that the nonlinear part due to~$F_{i,a}$ always has a decelerating effect and hence influences the estimates only in a ``good'' way, so that a linear estimate is possible in the end.

To achieve~(O4), in Section~\ref{Sec:ContrStruc} we will present a decentralized control design, which for vehicle~$i$ only requires the instantaneous measurements of the distances $x_{i-1}(t) - x_i(t)$, the velocity~$v_i(t)$ and the relative velocity $v_{i-1}(t) - v_i(t)$. Although the latter quantity is not as directly available as the other two, it is local information and can be measured by vehicle~$i$ e.g.\ using a radar speed gun or modern LIDAR devices for instance. Apart from these measurements, the controller will require no exact knowledge of any of the system parameters or initial values.

\subsection{Organization of the present paper}

The paper is structured as follows. In Section~\ref{Sec:ContrStruc} we present the decentralized controller, which aims at achieving the control objectives (O1)--(O4) introduced above. The control design exploits ideas from funnel control. Feasibility of the control is proved in the main result in Section~\ref{Sec:Main}: it is shown that safety and a good traffic flow are guaranteed (objectives~(O1) and~(O2)) and practical velocity string stability is achieved (objective~(O3)) by the decentralized controller  (objective~(O4)). The results are illustrated by two different simulation scenarios for  inhomogeneous platoons in Section~\ref{Sec:Sim}. The paper is concluded by Section~\ref{Sec:Concl}.

\section{Decentralized control design}\label{Sec:ContrStruc}

In this section, we will introduce a controller aimed at achieving the control objectives (O1)--(O4). As a first step, let
\begin{align}
\xi_i(t) &= x_i(t) - x_{i-1}(t) + d_{\min},
\end{align}
denote the difference between the inter-vehicle distance and the safety distance, and introduce
\begin{align}
w_i(t) &= v_i(t) - v_{i-1}(t) - \frac{1}{\xi_i(t)} - \frac{1}{M + \xi_i(t)}.
\end{align}
Recall that (O1) and (O2) require vehicle $i$ to satisfy $d_{\min} < x_{i-1}(t) - x_i(t) < d_{\max}$, which is equivalent to $-M < \xi_i(t) < 0$, with
\begin{align}
	M := d_{\max} - d_{\min}.
\end{align}
Note that the definition of $w_i(t)$ is such that $w_i(t)\to\infty$ whenever $\xi_i(t)\nearrow 0$ and $w_i(t)\to-\infty$ whenever $\xi_i(t)\searrow -M$. As $w_i(t)$ characterizes safety, we would like to ensure its boundedness by forcing it to evolve in a performance funnel
\[
    \cF_\psi = \setdef{(t,w)\in\R_{\ge 0}\times\R}{ |w| < \psi(t)},
\]
defined by a function $\psi$ belonging to the set of admissible funnel boundaries
\[
    \Psi = \setdef{\psi\in W^{1,\infty}(\R_{\ge 0},\R)}{\!\!\begin{array}{l} \text{$\psi(t)>0$ for all $t\ge 0$},\\ \liminf_{t\to\infty} \psi(t)>0\end{array}\!\!},
\]
see also Fig.~\ref{Fig:funnel}. Evolution in the performance funnel, that is $(t,w_i(t))\in \cF_\psi$ for all $t\ge 0$, thus guarantees objective~(O1) and~(O2). The practical velocity string stability property~\eqref{eq:string-stab} will be inferred from the combination of the funnel control law with a constant headway-like spacing policy (i.e., including a dependence on the velocities~$v_i$ and~$v_{i-1}$) as
\[
    u_i(t) = -k_1 (v_i(t) - v_{i-1}(t)) - k_2 e_i(t) - k_{i,3}(t) w_i(t),
\]
where $k_1, k_2$ are positive gain parameters,~$k_{i,3}(t) = 1/\big(\psi(t) - |w_i(t)|\big)$ is the funnel gain and $e_i(t) = \xi_i(t) + \lambda v_i(t)$ with $\lambda >0$ is an error variable related to the constant headway policy. The latter is known for its inherent attenuation of disturbances~\cite{BessJoha17} and will hence ensure attainment of objective~(O3). More precisely, if $e_i(\cdot)$ is constant, then $\dot e_i(t) = 0$ for $t\ge 0$, which gives
\[
    \dot v_i(t) = -\frac{1}{\lambda} v_i(t) + \frac{1}{\lambda} v_{i-1}(t)
\]
and describes how disturbances of the leader velocity are attenuated through the platoon. The overall decentralized design of the controller (and, thus, attainment of objective~(O4)) is summarized in the following controller for vehicle platoons~\eqref{eq:Sys}:

\begin{figure*}[!b]
\begin{equation}\label{eq:def-K}
\begin{aligned}
  K: & \setdef{(t,x,v,\hat v)\in\R_{\ge 0}\times \R^3}{\begin{array}{l} -d_{\min} > x > -d_{\max},\ \\
     \left|v - \hat v - \frac{1}{x + d_{\min}} - \frac{1}{x + d_{\max}}\right| < \psi(t)
    \end{array}}\to\R,\\
  & (t,x,v,\hat v)\mapsto -k_1 (v - \hat v) - k_2(x + d_{\min} + \lambda v) - \frac{v - \hat v - \frac{1}{x + d_{\min}} - \frac{1}{x + d_{\max}}}{\psi(t) - \left|v - \hat v - \frac{1}{x + d_{\min}} - \frac{1}{x + d_{\max}}\right|}.
\end{aligned}
\end{equation}
\end{figure*}

\begin{equation}\label{eq:SSFCC}
\boxed{
\begin{aligned}
    \xi_i(t) &= x_i(t) - x_{i-1}(t) + d_{\min},\\
    e_i(t) &= \xi_i(t) + \lambda v_i(t),\\
    w_i(t) &= v_i(t) - v_{i-1}(t) - \frac{1}{\xi_i(t)} - \frac{1}{M + \xi_i(t)},\\
    k_{i,3}(t) &=  \frac{1}{\psi(t) - |w_i(t)|},\\
    u_i(t) &= -k_1 (v_i(t) - v_{i-1}(t)) - k_2 e_i(t) - k_{i,3}(t) w_i(t),
\end{aligned}
}
\end{equation}
with the controller design parameters
\begin{equation}\label{eq:FC-param}
\boxed{
\begin{aligned}
& d_{\max} > d_{\min}> 0,\quad M:=d_{\max} - d_{\min},\\
& \lambda, k_1, k_2 > 0,\quad \psi\in\Psi.
\end{aligned}
}
\end{equation}

Note that, for given funnel boundary $\psi$, the controller~\eqref{eq:SSFCC} only depends on the inter-vehicle distance $x_i - x_{i-1}$ and the velocities $v_i$ and $v_{i-1}$ of the controlled vehicle and its predecessor, respectively. For later use we introduce the brief presentation of the feedback as
\begin{equation}\label{eq:SSFCC-K}
  u_i(t) = K\bigl(t,x_i(t)-x_{i-1}(t),v_i(t),v_{i-1}(t)\bigr),
\end{equation}
where the function~$K$ is formally defined as in~\eqref{eq:def-K}.

\begin{figure}[h]
\begin{center}
\begin{tikzpicture}[scale=0.45]
\tikzset{>=latex}
  \filldraw[color=gray!25] plot[smooth] coordinates {(0.15,4.7)(0.7,2.9)(4,0.4)(6,1.5)(9.5,0.4)(10,0.333)(10.01,0.331)(10.041,0.3) (10.041,-0.3)(10.01,-0.331)(10,-0.333)(9.5,-0.4)(6,-1.5)(4,-0.4)(0.7,-2.9)(0.15,-4.7)};
  \draw[thick] plot[smooth] coordinates {(0.15,4.7)(0.7,2.9)(4,0.4)(6,1.5)(9.5,0.4)(10,0.333)(10.01,0.331)(10.041,0.3)};
  \draw[thick] plot[smooth] coordinates {(10.041,-0.3)(10.01,-0.331)(10,-0.333)(9.5,-0.4)(6,-1.5)(4,-0.4)(0.7,-2.9)(0.15,-4.7)};
  \draw[thick,fill=lightgray] (0,0) ellipse (0.4 and 5);
  \draw[thick] (0,0) ellipse (0.1 and 0.333);
  \draw[thick,fill=gray!25] (10.041,0) ellipse (0.1 and 0.333);
  \draw[thick] plot[smooth] coordinates {(0,2)(2,1.1)(4,-0.1)(6,-0.7)(9,0.25)(10,0.15)};
  \draw[thick,->] (-2,0)--(12,0) node[right,above]{\normalsize$t$};
  \draw[thick,dashed](0,0.333)--(10,0.333);
  \draw[thick,dashed](0,-0.333)--(10,-0.333);
  \node [black] at (0,2) {\textbullet};
  \draw[->,thick](4,-3)node[right]{\normalsize$\mu$}--(2.5,-0.4);
  \draw[->,thick](3,3)node[right]{\normalsize$(0,w(0))$}--(0.07,2.07);
  \draw[->,thick](9,3)node[right]{\normalsize$\psi(t)$}--(7,1.4);
\end{tikzpicture}
\end{center}
\caption{Error evolution in a funnel $\mathcal F_{\psi}$ with boundary $\psi(t)$.}
\label{Fig:funnel}
\end{figure}

By the properties of~$\Psi$ there exists $\mu>0$ such that $\psi(t)\ge \mu$ for all $t\ge 0$. It is important to note that the function $\psi\in\Psi$ is a design parameter in the control law~\eqref{eq:SSFCC} and its choice is up to the designer. Although~$\psi$ does not need to be monotonically decreasing in general, it is usually convenient to choose it of the form
\[
    \psi(t) = \alpha e^{-\beta t} + \gamma,\quad  \alpha\ge 0,\ \beta, \gamma >0.
\]
Other typical choices for funnel boundaries are outlined in~\cite[Sec.~3.2]{Ilch13}.

One may observe that the control law~\eqref{eq:SSFCC} introduces several potential singularities in the closed-loop differential equation, for instance at $\xi_i(t) = 0$, $\xi_i(t) = -M$ and $|w_i(t)| = \psi(t)$. It will be shown in the main result in the following section that any solution satisfies $-M < \xi_i(t) < 0$ or, what is the same, $d_{\min} < x_{i-1}(t) - x_i(t) < d_{\max}$ and additionally $|w_i(t)| < \psi(t)$. Therefore, the reciprocal terms in~\eqref{eq:SSFCC} guarantee the control objectives~(O1) and~(O2) and thus ensure a safe operation of the vehicle platoon.

\begin{figure*}[!b]
\begin{equation}\label{eq:cD}
    \cD := \setdef{(t,x,v)\in\R_{\ge 0}\times \R^N\times\R^N}{\begin{array}{l} d_{\min} < x_0(t) - x_1 < d_{\max},\ d_{\min} < x_{i-1} - x_i < d_{\max},\\
     \left|v_1 - v_0(t) - \frac{1}{x_1 - x_0(t) + d_{\min}} - \frac{1}{x_1 - x_0(t) + d_{\max}}\right| < \psi(t),\\[2mm]
     \left|v_i - v_{i-1} - \frac{1}{x_i - x_{i-1} + d_{\min}} - \frac{1}{x_i - x_{i-1} + d_{\max}}\right| < \psi(t),\\
     i=2,\ldots,N
    \end{array}},
\end{equation}
\end{figure*}

\section{Main result}\label{Sec:Main}

Before stating the main result we introduce some assumptions, necessary for the proof of feasibility of~\eqref{eq:SSFCC}.

\begin{assumption}\label{Ass1}
The functions $\theta_i$ and $\rho_i$ are continuous for all $i=1,\ldots,N$. Furthermore, there exist $\bar d>0$,  $\bar \rho > 0$ and $\bar m > 0$ such that for all $N\in\N$, for all $i=1,\ldots,N$, for all $t\ge 0$ and for all $x\in\R$ we have that
\begin{align*}
  |F_{i,g}(x) + F_{i,r}(x) + d_i(t)| &\le \bar d,\\
    m_i &\le \bar m,\\
   0\le \tfrac12 \rho_i(t,x)\, C_{i,d}\, A_i &\le \bar\rho.
\end{align*}
\end{assumption}

\begin{assumption}\label{Ass2}
The leader position trajectory $x_0 \in C^2(\R_{\ge 0},\R)$ is such that $v_0 := \dot x_0$ and $\dot v_0$ are bounded. Furthermore, there exists $\delta >0$ (depending on $x_0(0)$ and $v_0(0)$) such that for all $N\in\N$ and for all $i=1,\ldots,N$ we have that $-M + \delta \le \xi_i(0) \le -\delta$ and $|w_i(0)| \le \psi(0) - \delta$. The initial velocities are bounded by $|v_i(0)| \le M/\lambda$ for all $i=1,\ldots,N$ and all $N\in\N$.
\end{assumption}

While the above assumptions are standard and can always be satisfied in any platoon, the following assumption is of a more technical nature and required for the proof. It essentially states that, when the platoon length would become infinitely long, then the masses of the vehicles must be monotonically decreasing to zero from a certain point on.

\begin{assumption}\label{Ass3}
There exist $p,q\in (0,1)$ with $(1+p)q<1$ and $N_0\in\N$ such that for all $N\ge N_0$ and all $i=N_0,\ldots, N$ we have that $|m_i - m_{i-1}| \le p\, m_i$ and $m_i \le q\, m_{i-1}$.
\end{assumption}

Since in practice all platoons have finite length, the above assumption is always satisfied with $N_0$ being the number of existing vehicles on earth. As mentioned above, it is simply required for technical reasons. Furthermore, from a mathematical point of view, it provides some insight into the required structure of platoons with arbitrary length. Note that it is a consequence of Assumption~\ref{Ass3} that $q < p$.

Still, one might argue that Assumption~\ref{Ass3} is unreasonable from a practical point of view. It is expected that the assumption can be avoided when all vehicles in the platoon have access to some common information (such as lead vehicle position and velocity) as in, e.g., \cite{BessJoha17}, but a detailed study of this topic is left for future work.

The feasibility proof of the controller~\eqref{eq:SSFCC} for~\eqref{eq:Sys} requires the notion of a solution. For a platoon length~$N$, $(x,v) = (x_1,\ldots,x_N,v_1,\ldots,v_N):[0,\omega)\to\R^{2N}$, $\omega\in(0,\infty]$, is called a solution of~\eqref{eq:Sys},~\eqref{eq:SSFCC}, if the initial conditions~\eqref{eq:IC} hold and $(x,v)$ is locally absolutely continuous and satisfies the differential equation in~\eqref{eq:Sys} with $u_1,\ldots,u_N$ as in~\eqref{eq:SSFCC} for almost all $t\in[0,\omega)$; $(x,v)$ is called maximal, if it has
no right extension that is also a solution. Note that uniqueness of solutions of~\eqref{eq:SSFCC} for~\eqref{eq:Sys} is not guaranteed in general.

We are now in the position to state the main result of this paper.

\begin{theorem}\label{thm:main}
Consider a platoon of~$N$ vehicles with dynamics~\eqref{eq:Sys} and initial conditions~\eqref{eq:IC}, where $x_0 \in C^2(\R_{\ge 0},\R)$ is the position of the leader vehicle and $v_0 := \dot x_0$. Furthermore, let Assumptions~\ref{Ass1}--\ref{Ass3} hold. Then there exists a sufficiently large $k_2>0$ (independent of~$N$) such that the controller~\eqref{eq:SSFCC} with parameters~\eqref{eq:FC-param} applied to~\eqref{eq:Sys} yields a closed-loop system which has a solution, and every solution can be extended to a maximal solution $(x_1,\ldots,x_N,v_1,\ldots,v_N):[0,\omega)\to\R^{2N}$, $\omega\in(0,\infty]$, which has the properties:
\begin{enumerate}[(i)]
  \item global existence: $\omega=\infty$;
  \item $v_i$ and $u_i$ are bounded for all $i=1,\ldots,N$, independent of~$i$ and~$N$;
  \item there exist $\eps_1, \eps_2>0$, independent of~$i$ and~$N$, so that for all $i=1,\ldots,N$ and all $t\ge  0$ we have
    \begin{align*}
        &-M + \eps_1 \le \xi_i(t) \le -\eps_1\quad \text{and} \quad |w_i(t)| \le \psi(t) - \eps_2;
    \end{align*}
  \item for all $i=1,\ldots,N$ we have
  \[
    \hspace*{-3mm} \|v_i\|_\infty \le \frac{M}{\lambda} + \frac{1}{\lambda k_2} \left(\frac{\|\psi\|_\infty}{\eps_2} + \bar d\right)  + \left(\frac{k_1}{k_1+ \lambda k_2}\right)^i \|v_0\|_\infty.
  \]
  \vskip2mm%
\end{enumerate}
\end{theorem}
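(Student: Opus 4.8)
\emph{Proof strategy.} The plan is to follow the standard programme for funnel control proofs, adapted to the cascade structure of the platoon. Writing the closed loop as a single time-varying ordinary differential equation on the open set $\cD$ from~\eqref{eq:cD}, with right-hand side $\dot x_i = v_i$ and $\dot v_i = \tfrac{1}{m_i}\bigl(K(t,x_i-x_{i-1},v_i,v_{i-1}) - f_i(t,x_i,v_i) + d_i(t)\bigr)$, I would first invoke a Carath\'eodory-type existence theorem: the right-hand side is measurable in $t$, continuous in the state on $\cD$, and locally essentially bounded, while the initial data from~\eqref{eq:IC} and Assumption~\ref{Ass2} lie in $\cD$ at $t=0$ by construction of $w_i$. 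Hence a maximal solution on some $[0,\omega)$ exists. The property I would exploit is that a maximal solution must leave every compact subset of $\cD$ as $t\to\omega$; therefore, confining the solution to a compact subset of $\cD$ on $[0,\omega)$ with bounds independent of $\omega$ forces $\omega=\infty$ and yields (i). As uniqueness is not assumed, all estimates are carried out for an arbitrary maximal solution.

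The heart of the proof is a uniform (in $i$ and $N$) a priori estimate on $[0,\omega)$, established by induction on the vehicle index $i$, using that vehicle $i$ is driven only by $v_{i-1}$ and $\dot v_{i-1}$. The induction hypothesis is that $v_{i-1}$ and $\dot v_{i-1}$ are bounded by constants not depending on $i$ or $N$, with base case the leader, for which $v_0,\dot v_0$ are bounded by Assumption~\ref{Ass2}. The main work in the step is \emph{funnel maintenance}, i.e.\ producing a uniform $\eps_2>0$ with $\abs{w_i(t)}\le\psi(t)-\eps_2$. I would compute $w_i\dot w_i$ and substitute the closed loop: the funnel feedback contributes $-\tfrac{1}{m_i}\,w_i^2/(\psi(t)-\abs{w_i})$, which tends to $-\infty$ as $\abs{w_i}\nearrow\psi$, while the remaining contributions stay bounded near the boundary. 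Two features are crucial here: the drag $F_{i,a}$ is always decelerating, so large $\abs{v_i}$ only improves the estimate; and the reciprocal terms linking $w_i$ and $\xi_i$ ensure that a bounded $w_i$ with bounded relative velocity keeps $\xi_i$ away from $0$ and $-M$, which supplies~(iii) with a uniform $\eps_1$. A standard funnel argument, comparing the feedback blow-up against the bounded perturbation at a putative first boundary approach, then yields $\eps_2$.

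With the margin in hand one has $\abs{k_{i,3}(t)w_i(t)}\le\norm{\psi}_\infty/\eps_2$, and I would derive~(iv) by a comparison argument on $m_i\dot v_i = -(k_1+\lambda k_2)v_i + k_1 v_{i-1} - k_2\xi_i - k_{i,3}w_i - f_i + d_i$. Discarding the favourable drag term, whenever $v_i$ exceeds $\bigl(k_1\norm{v_{i-1}}_\infty + k_2 M + \norm{\psi}_\infty/\eps_2 + \bar d\bigr)/(k_1+\lambda k_2)$ one gets $\dot v_i<0$; with the initial bound $\abs{v_i(0)}\le M/\lambda$ and the symmetric lower estimate this produces the recursion $\norm{v_i}_\infty \le r\norm{v_{i-1}}_\infty + C$ with $r=k_1/(k_1+\lambda k_2)<1$ and $C=(k_2M+\norm{\psi}_\infty/\eps_2+\bar d)/(k_1+\lambda k_2)$. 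Setting $B:=M/\lambda + (\norm{\psi}_\infty/\eps_2+\bar d)/(\lambda k_2)$, one checks $C=(1-r)B$, so solving the linear recursion gives exactly $\norm{v_i}_\infty\le B + r^i\norm{v_0}_\infty$, i.e.\ statement~(iv); this also yields boundedness of $v_i$ in~(ii), and the formula in~\eqref{eq:SSFCC} together with~(iii)--(iv) bounds $u_i$. To close the induction I must bound $\dot v_i$ uniformly; since $\dot v_i=(u_i-f_i+d_i)/m_i$ carries the factor $1/m_i$, this is where Assumption~\ref{Ass3} enters: the decay $m_i\le q\,m_{i-1}$ lets the restoring term (scaling like $1/m_i$) dominate the predecessor-acceleration disturbance (scaling like $1/m_{i-1}$) with a fixed gap $1/q>1$, while $\abs{m_i-m_{i-1}}\le p\,m_i$ and $(1+p)q<1$ keep the acceleration recursion uniform in $i,N$.

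The main obstacle is precisely this intertwining: funnel maintenance for vehicle $i$ needs uniform bounds on $v_i,v_{i-1},\dot v_{i-1}$, whereas the velocity bound needs the margin $\eps_2$. I would resolve it by the simultaneous induction above, running the funnel argument and the velocity comparison together on $[0,\omega)$ (the feedback blow-up beating the locally finite perturbation prevents boundary contact in any case), with Assumption~\ref{Ass3} as the device rendering $\eps_1,\eps_2$ and the velocity and acceleration bounds independent of $N$. Once these $\omega$-independent bounds confine the solution to a compact subset of $\cD$, maximality forces $\omega=\infty$, giving (i) and completing the proof.
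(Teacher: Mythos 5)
Your architecture coincides with the paper's: Carath\'eodory existence on the open set $\cD$ from~\eqref{eq:cD}, the ``maximal solution leaves every compact subset'' argument for $\omega=\infty$, the reciprocal terms in $w_i$ forcing $\xi_i$ into $[-M+\eps_1,-\eps_1]$ (the paper even gets $\eps_1=(\|\psi\|_\infty+1/\delta)^{-1}$ explicitly, with no induction needed), a first-boundary-contact contradiction for $|w_i|\le\psi-\eps_2$, and the recursion $\|v_i\|_\infty\le r\|v_{i-1}\|_\infty+(1-r)B$ with $r=k_1/(k_1+\lambda k_2)$, which you solve correctly to obtain~(iv).

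There is, however, a gap at the single most delicate point, and your induction hypothesis papers over it. You posit that $v_{i-1}$ and $\dot v_{i-1}$ are bounded ``by constants not depending on $i$ or $N$.'' But the funnel-maintenance step for vehicle $i$ produces a margin $\hat\eps_i$ that depends on the perturbation inherited from vehicle $i-1$, and that perturbation contains the predecessor's funnel gain $k_{i-1,3}\le 1/\hat\eps_{i-1}$ (entering through $\dot v_{i-1}$). What the induction actually delivers is therefore a decreasing sequence of margins $\hat\eps_i\le\hat\eps_{i-1}$, i.e.\ bounds that \emph{do} depend on $i$; assuming $i$-independence as the hypothesis is circular. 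The paper resolves this in a separate step: with $z_i=1/\hat\eps_i$ the induction yields a recursion of the form $z_i=(1+p)q\,z_{i-1}+\tilde c_1 z_{i-1}/k_2+\tilde c_2 z_{i-1}^2/k_2^2+k_2\tilde c_3+\tilde c_4$, where $(1+p)q<1$ comes from Assumption~\ref{Ass3} via $|1/m_i-1/m_{i-1}|\le pq/m_i$; one must then choose $k_2$ so large that this map admits a fixed point $\hat z$ (the quadratic term could otherwise cause blow-up) and that $z_1\le\hat z$, which holds because $z_1$ scales like $k_2$ while $\hat z$ scales like $k_2^2$. This is exactly where the clause ``there exists a sufficiently large $k_2$'' in the theorem is cashed out; your sketch names Assumption~\ref{Ass3} and the contraction $(1+p)q<1$ as the mechanism but never sets up or bounds this recursion, nor explains why $k_2$ must be large, so the uniformity in $i$ and $N$ of $\eps_2$ and of the bounds in (ii) and (iv) remains unproven.
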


\begin{proof} Let $k_1>0$, $\lambda>0$, $d_{\max}>d_{\min}>0$ and $\psi\in\Psi$ be arbitrary control parameters and set $M:=d_{\max}-d_{\min}$. The parameter $k_2>0$ will be specified later. The proof is divided into several steps.

\textit{Step 1}: We show existence of a maximal solution. Define the set $\cD$ as in~\eqref{eq:cD} and observe that it is relatively open in $\R_{\ge 0}\times \R^N\times\R^N$ and contains the point $(0,x(0),v(0))$ by Assumption~\ref{Ass2}. Roughly speaking, the set $\cD$ is the intersection of all performance funnels associated with any two consecutive vehicles in the platoon. Further define the function $f:\cD\to\R^{2N}$ by
\begin{align*}
    &f(t,x,v) =\\
    & \begin{pmatrix} v_1 \\ \vdots \\ v_N \\ K(t,x_1-x_0(t),v_1,v_0(t)) - f_1(t,x_1,v_1) + d_1(t) \\
    K(t,x_2-x_1,v_2,v_1) - f_2(t,x_2,v_2) + d_2(t) \\ \vdots \\
    K(t,x_N-x_{N-1},v_N,v_{N-1}) - f_N(t,x_N,v_N) + d_N(t)
    \end{pmatrix},
\end{align*}
where~$K$ is defined in~\eqref{eq:def-K}. The closed-loop system~\eqref{eq:Sys},~\eqref{eq:SSFCC} is then equivalent to
\[
    \begin{pmatrix} \dot x(t)\\ \dot v(t) \end{pmatrix} = f(t,x(t),v(t))
\]
with initial condition~\eqref{eq:IC}. Since $f_1,\ldots,f_N$ are continuous in~$(t,x,v)$ by Assumption~\ref{Ass1} when $F_{i,r}$ is chosen as in~\eqref{eq:friction}, and $d_i\in L^\infty(\R_{\ge 0},\R)$, it follows that~$f$ is measurable and locally integrable in~$t$ and continuous in~$(x,v)$. Therefore, it follows from the theory of ordinary differential equations, see~\cite[\S\,10,~Thm.\,XX]{Walt98}, that there exists a solution, which can be extended to a maximal solution $(x,v) :[0,\omega)\to\R^{2N}$, $\omega\in(0,\infty]$. Furthermore, by maximality, the closure of the graph of $(x,v)$ is not a compact subset of~$\cD$.

\textit{Step 2}: We show that for
\[
    \eps_1 := \left(\|\psi\|_\infty + \tfrac{1}{\delta}\right)^{-1} < \delta,
\]
which is independent of~$i$ and~$N$, and where~$\delta$ is from Assumption~\ref{Ass2}, we have that $-M+\eps_1\le \xi_i(t)\le -\eps_1$ for all $t\in [0,\omega)$ and all $i=1,\ldots,N$. We show the first inequality and, seeking a contradiction, assume that there exist $i\in\{1,\ldots,N\}$ and $t_1\in [0,\omega)$ such that $\xi_i(t_1) < -M +\eps_1$. By Assumption~\ref{Ass2} we find that $t_1>0$ and hence
\[
    t_0 := \max \setdef{t\in[0,t_1)}{\xi_i(t) = -M +\eps_1}
\]
is well-defined.
Then $\xi_i(t) \le -M +\eps_1 < -M + \delta \stackrel{\text{Ass.~\ref{Ass2}}}{\le} -\delta$ and, by Step~1, $|w_i(t)|<\psi(t)$ for all $t\in[t_0,t_1]$ and we may compute that
\begin{align*}
  \dot \xi_i(t) &= v_i(t) - v_{i-1}(t) \stackrel{\eqref{eq:SSFCC}}{=} w_i(t) + \frac{1}{\xi_i(t)} + \frac{1}{M + \xi_i(t)}\\
  &\ge - \|\psi\|_\infty - \frac{1}{\delta} + \frac{1}{\eps_1} = 0
\end{align*}
for all $t\in [t_0,t_1]$, hence we have $\xi_i(t_0) \le \xi_i(t_1)$ and arrive at the contradiction
\[
    -M +\eps_1 = \xi_i(t_0) \le \xi_i(t_1) < -M +\eps_1.
\]
The proof of $\xi_i(t)\le -\eps_1$ for all $t\in [0,\omega)$ and all $i=1,\ldots,N$ is analogous and omitted.

\textit{Step 3}: We show that there exists $\eps_2>0$, independent of~$i$ and~$N$, such that $|w_i(t)|\le \psi(t) - \eps_2$ for all $t\in[0,\omega)$ and all $i=1,\ldots,N$.

\textit{Step 3a}: First, we show that there exists $\hat\eps_1 = \hat \eps_1(k_2)$ such that $|w_1(t)|\le \psi(t) - \hat\eps_1$ for all $t\in[0,\omega)$. Define $\theta := \inf_{t\ge 0} \psi(t)$ and choose    $\hat \eps_1 \le \min\left\{\delta, \frac{\theta}{2}\right\}$; $\hat\eps_1$ will be chosen even smaller later on. Seeking a contradiction, assume that there exists $t_1\in [0,\omega)$ such that $|w_1(t_1)| > \psi(t_1) - \hat\eps_1$. By Assumption~\ref{Ass2} we find that $t_1>0$ and hence
\[
    t_0 := \max \setdef{t\in[0,t_1)}{|w_1(t)| = \psi(t) - \hat\eps_1}
\]
is well-defined. Then
\[
    |w_1(t)| \ge \psi(t) - \hat\eps_1 \ge \frac{\theta}{2} \ \ \text{and}\ \ k_{1,3}(t) = \frac{1}{\psi(t) - |w_1(t)|} \ge \frac{1}{\hat\eps_1}
\]
for all $t\in[t_0,t_1]$ and we record that by~\eqref{eq:SSFCC}
\[
    \dot \xi_1(t) = v_1(t) - v_0(t) = w_1(t) + \frac{1}{\xi_1(t)} + \frac{1}{M+\xi_1(t)}
\]
and
\begin{equation}\label{eq:v1=}
    v_1(t) = v_0(t) + w_1(t) + \frac{1}{\xi_1(t)} + \frac{1}{M+\xi_1(t)},
\end{equation}
which we will frequently use in the following. We may now compute that
\begin{align*}
  & \tfrac12\ddt w_1(t)^2 = w_1(t)\left(\dot v_1(t) \!-\! \dot v_0(t) \!+\! \dot \xi_1(t) \left(\tfrac{1}{\xi_1(t)^2} \!+\! \tfrac{1}{(M+\xi_1(t))^2}\right)\!\right) \\
  &= w_1(t)\left(-\tfrac{k_1}{m_1}(v_1(t)- v_0(t)) - \tfrac{k_2}{m_1} (\xi_1(t) + \lambda v_1(t))\right. \\
  &\quad  - \tfrac{k_{1,3}(t)}{m_1} w_1(t) + \tfrac{1}{m_1}\big(d_1(t) - f_1(t,x_1(t),v_1(t))\big) - \dot v_0(t) \\
  &\quad \left.+ \left(w_1(t) + \tfrac{1}{\xi_1(t)} + \tfrac{1}{M+\xi_1(t)}\right) \left(\tfrac{1}{\xi_1(t)^2} + \tfrac{1}{(M+\xi_1(t))^2}\right)\right)\\
  &\stackrel{\eqref{eq:v1=},\,\text{Step~2}}{\le} -\left(\tfrac{k_1}{m_1} + \tfrac{\lambda k_2}{m_1} + \tfrac{1}{m_1 \hat\eps_1} - \tfrac{2}{\eps_1^2}\right) w_1(t)^2 \\
  &\quad + |w_1(t)| \left( \left(\tfrac{k_1}{m_1} + \tfrac{\lambda k_2}{m_1}\right) \frac{2}{\eps_1} + \tfrac{k_2}{m_1} M  + \tfrac{\lambda k_2}{m_1} |v_0(t)| \right.\\
  &\quad \left.+ |\dot v_0(t)| + \tfrac{\bar d}{m_1} + \tfrac{\bar \rho}{m_1} \left(\|\psi\|_\infty + \tfrac{2}{\eps_1} + |v_0(t)|\right)^2 + \tfrac{4}{\eps_1^3}\right)
\end{align*}
for all $t\in[t_0,t_1]$. Next, we choose $k_2$ sufficiently large such that
\begin{equation}\label{eq:k1k2suff1}
    k_1 + \lambda k_2 \ge \frac{2m_1}{\eps_1^2}.
\end{equation}
Since $v_0$ and $\dot v_0$ are bounded by Assumption~\ref{Ass2}, and invoking $|w_1(t)| \ge \theta/2$, there exist constants $c_1, c_2>0$, independent of~$k_2$, such that
\[
  \forall\, t\in [t_0,t_1]:\ \frac{\ddt w_1(t)^2}{2 |w_1(t)|} \le- \frac{\theta}{2m_1 \hat\eps_1} + k_2 c_1 + c_2
\]
and we may choose
\begin{equation}\label{eq:hat-eps1}
    \hat \eps_1 \le \frac{2m_1}{\theta} \left( k_2 c_1 + c_2 + \|\dot \psi\|_\infty\right)^{-1}
\end{equation}
so that
\begin{align*}
    |w_1(t_1)| - |w_1(t_0)| &= \int_{t_0}^{t_1} \frac{\ddt w_1(t)^2}{2 |w_1(t)|} \ds{t}\\
     &\le - \int_{t_0}^{t_1} \|\dot \psi\|_\infty \ds{t} \le \psi(t_1) - \psi(t_0),
\end{align*}
which leads to the contradiction
\[
    \hat \eps_1 = |w_1(t_0)| - \psi(t_0) \ge |w_1(t_1)| - \psi(t_1) > \hat \eps_1.
\]

\textit{Step 3b}: Next, we show that there exists $\hat\eps_i = \hat \eps_i(k_2,\hat \eps_{i-1}) \le \hat \eps_{i-1}$ such that $|w_i(t)|\le \psi(t) - \hat\eps_i$ for all $t\in[0,\omega)$ and all $i=2,\ldots,N$. By way of induction, observe that this assertion is true for $i=1$ by Step~4a and, fixing $i\in\{2,\ldots,N\}$, assume that it is true for $j=2,\ldots,i-1$. In particular $\hat \eps_{i-1} \le \hat \eps_{i-2} \le \ldots \le \hat \eps_1$ and $|w_{j}(t)|\le \psi(t) - \hat\eps_{i-1}$ for all $t\in[0,\omega)$ and all $j=1,\ldots,i-1$, thus for all $t\in [0,\omega)$ and all $j=1,\ldots,i-1$ we have
\begin{align*}
	\tfrac{m_j}{2} \ddt v_j(t)^2
	& = v_j(t)\big(u_j(t) - f_j(t,x_j(t),v_j(t)) + d_j(t)\big)\\
	&\stackrel{\mathclap{\text{Ass.~\ref{Ass1}}}}{\le} ~\, -k_1 (v_j(t) - v_{j-1}(t)) v_j(t) - k_2 e_j(t) v_j(t) \\
	&\qquad- k_{j,3}(t) w_j(t) v_j(t) + \bar d |v_j(t)|\\
	&\qquad - \tfrac12 \rho_j(t,x_j(t))\, C_{j,d}\, A_j |v_j(t)| v_j(t)^2\\
	&\le -(k_1+\lambda k_2) v_j(t)^2  + \big(\bar d + k_2 M \\
	&\qquad + k_{j,3}(t)\psi(t) + k_1 |v_{j-1}(t)|\big) |v_j(t)|\\
	&\le -(k_1+\lambda k_2) v_j(t)^2  + \big(\bar d + k_2 M \\
	&\qquad + \frac{\|\psi\|_\infty}{\hat\eps_j} + k_1 \|v_{j-1}\|_\infty\big) |v_j(t)|,
\end{align*}
where we have used $|\xi_j(t)| \le M$ (from Step~2), and hence it follows from a straightforward argument by contradiction that
\begin{align*}
     \forall\, t\in[0,\omega):\ |v_j(t)| & \le \max\Bigg\{|v_j(0)|, \\
     &\quad \frac{\bar d + k_2 M +  \tfrac{\|\psi\|_\infty}{\hat\eps_j}  + k_1  \|v_{j-1}\|_\infty}{k_1+\lambda k_2}\Bigg\}.
\end{align*}
Note that from boundedness of $v_0$ and inductively applying the above estimate for $j=0,\ldots,i-2$ it follows that $v_{j-1}$ is indeed bounded on $[0,\omega)$.  Using that $|v_j(0)|\le\frac{M}{\lambda}$ by Assumption~\ref{Ass2}, it then follows from a straightforward induction argument that
\begin{multline}
   \|v_{i-1}\|_\infty \le \frac{M}{\lambda} + \frac{\bar d}{\lambda k_2}
    + \frac{\|\psi\|_\infty}{\lambda k_2 \hat\eps_{i-1}}  + \left(\frac{k_1}{k_1+\lambda k_2}\right)^{i-1} \|v_0\|_\infty,\label{eq:est-v_i-2}
\end{multline}
where suprememum norms are taken on $[0,\omega)$.
 Now, choose
\[
    \hat \eps_i \le \hat \eps_{i-1} \le \hat \eps_1 \le \min\left\{\delta, \tfrac{\theta}{2}\right\}
\]
and note that $\hat\eps_i$ will be chosen even smaller later on. Seeking a contradiction, assume that there exists $t_1\in [0,\omega)$ such that $|w_i(t_1)| > \psi(t_1) - \hat\eps_i$. By Assumption~\ref{Ass2} we find that $t_1>0$ and hence
\[
    t_0 := \max \setdef{t\in[0,t_1)}{|w_i(t)| = \psi(t) - \hat\eps_i}
\]
is well-defined. Then
\[
    |w_i(t)| \ge \psi(t) - \hat\eps_i \quad \text{and}\quad k_{i,3}(t) = \frac{1}{\psi(t) - |w_i(t)|} \ge \frac{1}{\hat\eps_i}
\]
for all $t\in[t_0,t_1]$ and we record that by~\eqref{eq:SSFCC}
\[
    \dot \xi_i(t) = v_i(t) - v_{i-1}(t) = w_i(t) + \frac{1}{\xi_i(t)} + \frac{1}{M+\xi_i(t)},
\]
which we will frequently use in the following. We may now compute, similar to Step~3a, that
\begin{align*}
  & \tfrac12\ddt w_i(t)^2 
  \stackrel{\text{Step~2}}{\le} -\left(\tfrac{k_1}{m_i} + \tfrac{\lambda k_2}{m_i} + \tfrac{1}{m_i \hat\eps_i} - \tfrac{2}{\eps_1^2}\right) w_i(t)^2 \\
  &\quad + |w_i(t)| \left( \left(\tfrac{k_1}{m_i} + \tfrac{\lambda k_2}{m_i}\right) \tfrac{2}{\eps_1} + \tfrac{k_1}{m_{i-1}}\left( \|\psi\|_\infty +\tfrac{2}{\eps_1}\right) \right.\\
  &\quad  + k_2 M  \left(\tfrac{1}{m_i} +\tfrac{1}{m_{i-1}}\right) + \lambda k_2 |v_{i-1}(t)|\, \left|\tfrac{1}{m_i} -\tfrac{1}{m_{i-1}}\right| \\
  &\quad + \tfrac{\psi(t)}{m_{i-1}\hat \eps_{i-1}}   + 2\bar d \left(\tfrac{1}{m_i} +\tfrac{1}{m_{i-1}}\right) \\
  &\quad\left. + \tfrac{\bar \rho}{m_i} \left(\|\psi\|_\infty + \tfrac{2}{\eps_1} + |v_{i-1}(t)|\right)^2 + \tfrac{\bar \rho}{m_{i-1}} |v_{i-1}(t)|^2 + \tfrac{4}{\eps_1^3}\right)
\end{align*}
for all $t\in[t_0,t_1]$. By Assumptions~\ref{Ass1} and~\ref{Ass3} we have that
\[
   1 \le \frac{\bar m}{m_i}\quad\text{and}\quad \frac{1}{m_{i-1}} \le \max \left\{\frac{q}{m_i}, \frac{\bar m}{m_i \, \min_{j=1,\ldots,N_0} m_j}\right\}.
\]
Invoking~\eqref{eq:k1k2suff1} and~\eqref{eq:est-v_i-2} and boundedness of~$v_0$ it follows that there exist constants $c_1, c_2, c_3, c_4 > 0$, independent of~$i$,~$N$ and~$k_2$, such that
\begin{multline}
  \hspace*{-3mm}  \forall\, t\in [t_0,t_1]:\ \frac{\ddt w_i(t)^2}{2 |w_i(t)|} \le - \frac{\psi(t)-\hat\eps_i}{m_i \hat\eps_i} + \frac{\psi(t)}{m_{i-1}\hat \eps_{i-1}} \\
    + \frac{\psi(t)}{\hat\eps_{i-1}} \left|\frac{1}{m_i} -\frac{1}{m_{i-1}}\right| + \frac{c_1}{m_i k_2\hat \eps_{i-1}} + \frac{c_2}{m_i k_2^2 \hat \eps_{i-1}^2}  + \frac{k_2}{m_i} c_3 + \frac{c_4}{m_i}.\label{eq:est-wi}
\end{multline}
Clearly, we may now choose $\hat \eps_i$ small enough so that $\frac{\ddt w_i(t)^2}{2 |w_i(t)|} \le -\|\dot \psi\|_\infty$ which, similar to Step~3a, will lead to a contradiction, thus proving the assertion.

\textit{Step 3c}: We show that for sufficiently large $k_2>0$ there exists~$\eps_2>0$ such that the sequence $(\hat\eps_i)$ from Step~3b can be chosen in such a way that it is uniformly bounded from below by~$\eps_2$, independent of~$i$ and~$N$. In virtue of Assumption~\ref{Ass3}, we restrict ourselves to the case $N\ge N_0$ and $i\ge N_0$. We now return to equation~\eqref{eq:est-wi} and instead of choosing $\hat \eps_i$ small enough, we choose it in a specific way. To this end, observe that by Assumption~\ref{Ass3} we have
\[
    \left|\frac{1}{m_i} -\frac{1}{m_{i-1}}\right| \le \frac{p}{m_{i-1}} \le \frac{pq}{m_{i}}
\]
and hence
\begin{multline*}
     \frac{\ddt w_i(t)^2}{2 |w_i(t)|} \le - \frac{\psi(t)}{m_i \hat\eps_i} + \frac{\psi(t)}{m_{i}\hat \eps_{i-1}} (1+p) q + \frac{c_1}{m_i k_2\hat \eps_{i-1}}\\
      + \frac{c_2}{m_i k_2^2 \hat \eps_{i-1}^2}
     + \frac{k_2}{m_i} c_3 + \frac{c_4+1}{m_i}
\end{multline*}
for all $t\in[t_0,t_1]$. The problem now boils down to finding $\hat \eps_i \le \hat \eps_{i-1}$ such that the right hand side of the above equation becomes less than $-\|\dot \psi\|_\infty$ and, at the same time, $\hat \eps_i$ is uniformly bounded away from zero for $i\ge N_0$, when $k_2$ is chosen sufficiently large. With the new constants
\begin{align*}
  \tilde c_1 &:= c_1/\theta, & \tilde c_2&:= c_2/\theta,\\
  \tilde c_3 &:= c_3/\theta, & \tilde c_4&:= (c_4 + 1 + \bar m \|\psi\|_\infty)/\theta,
\end{align*}
and $z_i := 1/\hat \eps_i$ we can achieve this by defining the sequence $(z_i)$ as
\[
    z_i = (1+p)q z_{i-1} + \frac{\tilde c_1}{k_2} z_{i-1} + \frac{\tilde c_2}{k_2^2} z_{i-1}^2
    + k_2 \tilde c_3 + \tilde c_4
\]
for $i\ge N_0$. Set $\alpha:=  (1+p)q + \frac{\tilde c_1}{k_2}$ and, in a first step, choose $k_2$ large enough such that $\alpha < 1$, which is possible since $(1+p)q < 1$ by Assumption~\ref{Ass3}. Now choose $k_2$ large enough such that
\[
    \frac{k_2^2(1-\alpha)^2}{4\tilde c_2} \ge k_2 \tilde c_3 + \tilde c_4
\]
and define
\[
    \hat z := \frac{k_2^2(1-\alpha)}{2\tilde c_2} + \sqrt{\frac{k_2^4(1-\alpha)^2}{4\tilde c_2^2} - \frac{k_2^2}{\tilde c_2}\big( k_2 \tilde c_3 + \tilde c_4\big)}.
\]
Observe that
\[
    \forall\, i\ge N_0:\ z_{i-1} \le \hat z\quad \implies \quad z_i \le \hat z.
\]
Since the sequence $(z_i)$ is monotonically increasing, this means that it is bounded by $\hat z$ provided that $z_1 \le \hat z$. If $z_1 = 1/\hat \eps_1$ is chosen with equality in~\eqref{eq:hat-eps1}, then it is proportional to~$k_2$, while $\hat z$ is proportional to~$k_2^2$. Consequently, $z_1 \le \hat z$ will be true for~$k_2$ sufficiently large. Finally, the constant $\eps_2>0$ may be defined as
\[
    \eps_2 := \frac{1}{\hat z} > 0.
\]

\textit{Step~4}: We show that $\omega = \infty$, and hence assertion~(i). Assuming that $\omega < \infty$ and taking Steps~2 and~3 into account, it follows that the graph of the solution~$(x,v)$ is a compact subset of~$\cD$, which contradicts the findings of Step~1.

\textit{Step 5}: We show assertions~(iii) and~(iv). Assertion~(iii) follows from Step~2 and Step~3. Assertion~(iv) is a direct consequence of~\eqref{eq:est-v_i-2} and Step~3.

\textit{Step 6}: We show assertion~(ii). Clearly, the boundedness of~$v_i$, independent of~$i$ and~$N$, follows from Step~5 and boundedness of~$v_0$ by Assumption~\ref{Ass2}. Since $\xi_i$ is bounded by~$M$ and~$k_{i,3}$ is bounded by $1/\eps_2$, independent of~$i$ and~$N$, the uniform boundedness of~$u_i$ may be inferred. This finishes the proof.
\end{proof}

\begin{remark}
In view of Theorem~\ref{thm:main} the decentralized nature of the controller~\eqref{eq:SSFCC} may be questioned, because the parameter $k_2 >0$ must be chosen sufficiently large and all vehicles~$i$ must agree on its value. One possibility is to choose a very large value encompassing any possible vehicle configuration, thus keeping the decentralized nature of the controller. If a more realistic viewpoint is taken, then some communication capabilities are required, at least between neighboring vehicles, so that a brief handshake is possible when new vehicles join the platoon. As long as those vehicles also satisfy Assumptions~\ref{Ass1}--\ref{Ass3} without changing the parameters therein, it suffices to communicate the values of $k_1, k_2$ (and maybe the function~$\psi$, if it is used as a flexible parameter) to the joining vehicles and the control~\eqref{eq:SSFCC} will still be feasible. This is based on the fact that, although~$k_2$ must be sufficiently large, it is independent of the number of vehicles in the platoon.
\end{remark}

\begin{remark}
  We like to provide some guidelines for choosing the parameters~$k_1$ and~$k_2$. Although an inspection of the proof of Theorem~3.4 provides a lower bound for~$k_2$, this is quite conservative and not suitable for use in practice. Therefore, we conducted a number of simulations and it turns out that choosing $k_1 = k_2 = 2 m_{\max}$, where $m_{\max}$ is the maximal mass of a vehicle in the platoon, serves as a good rule of thumb.  This means that, roughly speaking, the velocity and acceleration signals do not exhibit any peaks, the velocity plateau is not too far away and the control effort is comparatively small.
\end{remark}

\section{Simulations}\label{Sec:Sim}

We illustrate the novel controller~\eqref{eq:SSFCC} by an application to a platoon of twenty inhomogeneous vehicles with dynamics~\eqref{eq:Sys}, which follow a leader vehicle with position~$x_0$, velocity~$v_0$ and acceleration~$a_0$. We consider two different scenarios. The first scenario illustrates that safety is guaranteed even in the case of a sudden full brake of the leader vehicle (i.e., with maximal deceleration of $\SI{-5}{\metre\per\second^2}$). In the second scenario the leader vehicle follows a vivid curve with a strongly varying acceleration. Both scenarios are of an academic nature and serve the purpose of demonstrating the power of the controller design~\eqref{eq:SSFCC}: Even under extreme braking (Scenario~1) or strongly varying acceleration (Scenario~2) of the leader, the controller is able to achieve the objectives (O1)--(O4). Even more so, again to illustrate the power of the controller design in these situations, we chose a very tight range $d_{\min} = 2 < 15 = d_{\max}$ for the inter-vehicle distances, yet the simulations verify a very good controller performance.

For the simulations we choose all parameters of the vehicles in~\eqref{eq:Sys}, apart from their masses, to be equal with typical values (taken from~\cite{AstrMurr08}) summarized in Table~\ref{Tab:Param}.

\begin{table}[h!tb]
\centering
\begin{tabular}{|c|c|c|c|c|c|}
  \hline
    $m_i$ & $\theta_i(x)$ & $\rho_i(t,x)$ & $C_{i,d}$ & $C_{i,r}$ & $A_i$  \\
   \hline
    $1500\!+\!(-1)^i 300\,\si{\kilo\gram}$ & $\SI{0}{\radian}$ 
     & $\SI{1.3}{\kilo\gram\per\metre^3}$ & 0.32 & 0.01 & $\SI{2.4}{\metre^2}$\\
  \hline
\end{tabular}
\caption{Parameter values for the vehicles in the platoon with dynamics~\eqref{eq:Sys}.}
\label{Tab:Param}
\end{table}

For the approximated friction model~\eqref{eq:friction} we choose the parameter $\alpha = 100$. The initial conditions~\eqref{eq:IC} are chosen as $x_i^0=x_{i-1}^0-11\,\si{\metre}$, $x_0^0 = x_0(0) = 0 \,\si{\metre}$ and $v_i^0 = v_0(0) = \SI{20}{\metre\per\second}$ for all $i=1,\ldots,20$. The controller design parameters~\eqref{eq:FC-param} are chosen as $M=d_{\max}-d_{\min} = 13$, $\lambda = 0.5$, $k_1 = k_2 = 3600$ and $\psi(t) = e^{-2t} + 1$ for $t\ge 0$. All simulations have been performed in MATLAB (solver: \textsc{ode15s}, rel.\ tol.: $10^{-10}$, abs.\ tol.: $10^{-10}$) over the time interval 0--$40\,\si{\second}$.

\textbf{Scenario 1:} The leader position~$x_0$ and velocity~$v_0$ are chosen so that after a period of safe following the leader vehicle suddenly fully brakes with maximal deceleration of $\SI{-5}{\metre\per\second^2}$.

\captionsetup[subfloat]{labelformat=empty}
\begin{figure}[h!tb]
  \centering
  \subfloat[\qquad\quad Fig.~\ref{fig:sim1}a: Inter-vehicle distances, safety distance $d_{\min}$ and maximal\newline \phantom{mmk}\quad distance~$d_{\max}$]
{
\centering
\hspace{-5mm}
  \includegraphics[width=0.53\textwidth]{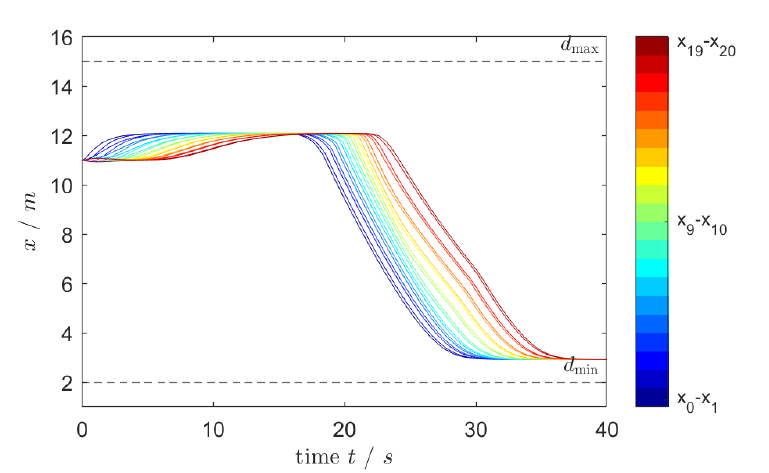}
\label{fig:sim2-e}
}\\
\subfloat[Fig.~\ref{fig:sim1}b: Velocities]
{
\centering
\hspace{-5mm}
  \includegraphics[width=0.53\textwidth]{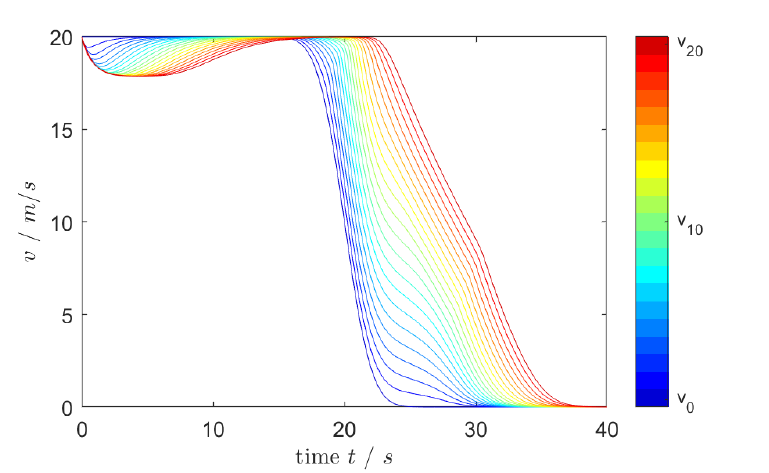}
\label{fig:sim2-u}
}\\
\subfloat[Fig.~\ref{fig:sim1}c: Accelerations]
{
\centering
\hspace{-5mm}
  \includegraphics[width=0.53\textwidth]{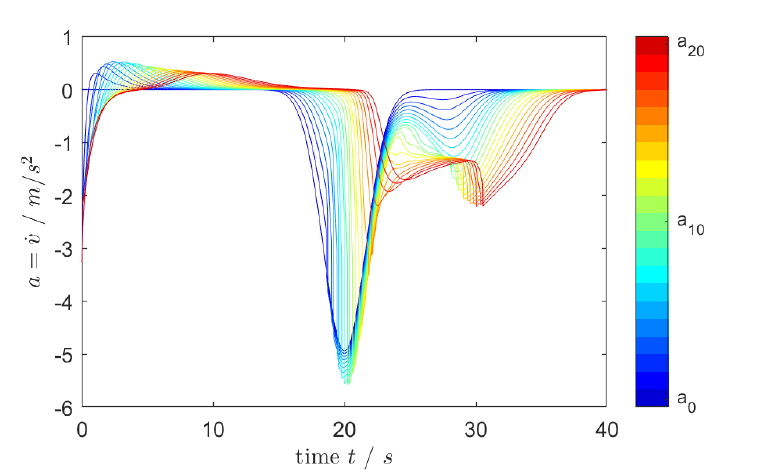}
\label{fig:sim2-u}
}
\caption{Simulation, under controller~\eqref{eq:SSFCC}, of system~\eqref{eq:Sys} with 20 vehicles following a leader in Scenario~1 and parameters as in Table~\ref{Tab:Param}.}
\label{fig:sim1}
\end{figure}

The simulation of the platoon~\eqref{eq:Sys} under the controller~\eqref{eq:SSFCC} in this scenario is depicted in Fig.~\ref{fig:sim1}. It can be seen in Fig.~\ref{fig:sim1}a that the prescribed safety and maximal distances are always guaranteed. From Fig.~\ref{fig:sim1}b  we can observe that the velocities reach a plateau (in the time interval 0--$10\,\si{\second}$), thus the controlled platoon exhibits practical velocity string stability. The accelerations of the vehicles, depicted in Fig.~\ref{fig:sim1}c, are similar to that of the leader for the first few followers and become much smaller with increasing vehicle index.

\textbf{Scenario 2:} The leader position trajectory is chosen as $x_0(t) = 10 + 19t-10 \cos(t/5)+\tfrac12 \sin(2t)$ so that the leader exhibits a strongly varying acceleration.

\captionsetup[subfloat]{labelformat=empty}
\begin{figure}[h!tb]
  \centering
  \subfloat[\qquad\quad Fig.~\ref{fig:sim2}a: Inter-vehicle distances, safety distance $d_{\min}$ and maximal\newline \phantom{mmk}\quad distance~$d_{\max}$]
{
\centering
\hspace{-5mm}
  \includegraphics[width=0.53\textwidth]{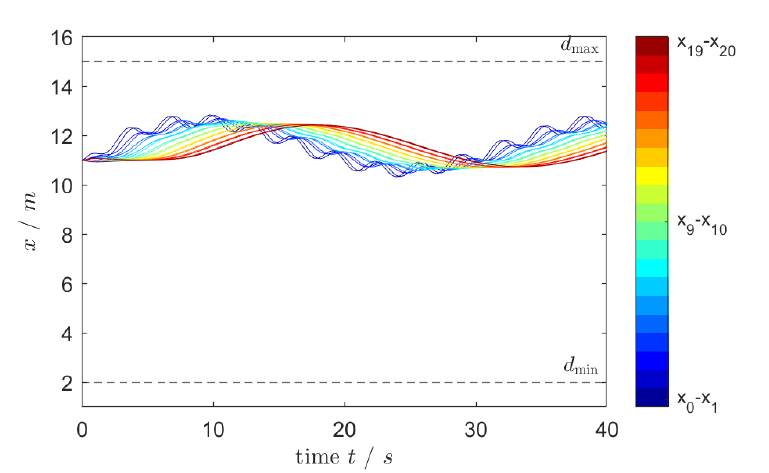}
\label{fig:sim2-e}
}\\
\subfloat[Fig.~\ref{fig:sim2}b: Velocities]
{
\centering
\hspace{-5mm}
  \includegraphics[width=0.53\textwidth]{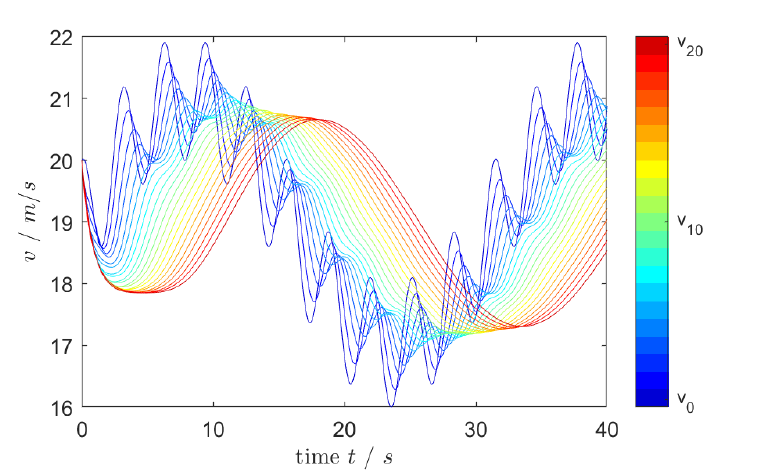}
\label{fig:sim2-u}
}\\
\subfloat[Fig.~\ref{fig:sim2}c: Accelerations]
{
\centering
\hspace{-5mm}
  \includegraphics[width=0.53\textwidth]{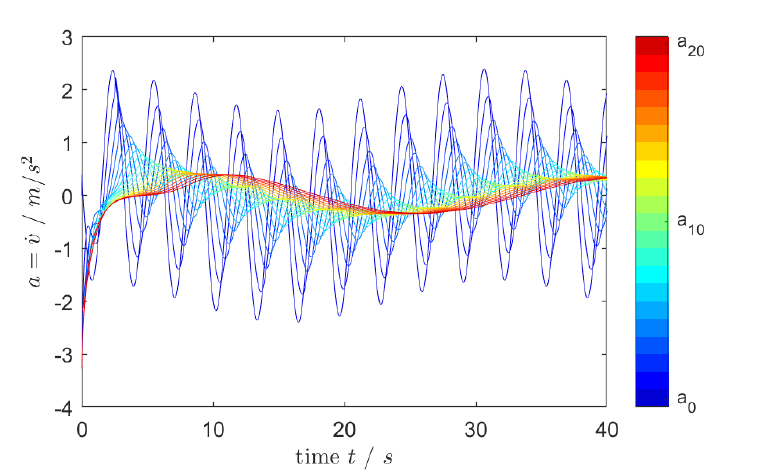}
\label{fig:sim2-u}
}
\caption{Simulation, under controller~\eqref{eq:SSFCC}, of system~\eqref{eq:Sys} with 20 vehicles following a leader in Scenario~2 and parameters as in Table~\ref{Tab:Param}.}
\label{fig:sim2}
\end{figure}

The simulation of the platoon~\eqref{eq:Sys} under the controller~\eqref{eq:SSFCC} in this scenario is shown in Fig.~\ref{fig:sim2}. Again, as depicted in Fig.~\ref{fig:sim2}a, the inter-vehicle distances stay within the corridor given by the prescribed safety and maximal distances, and even vary by only $\SI{2}{\metre}$. The velocities and accelerations of the followers are shown in Figs.~\ref{fig:sim2}b and~\ref{fig:sim2}c and it can be seen that the strongly varying behavior of the leader is ``smoothed out'' with increasing vehicle index. This is another characteristic of the practical velocity string stability of the controlled platoon.

Overall, the simulations show that via the novel controller~\eqref{eq:SSFCC} it can be achieved that all inter-vehicles distances stay within a prescribed tight corridor, guaranteeing safety and allowing for a very good traffic flow. At the same time, practical velocity string stability is achieved under the control and variations of the leader velocity are smoothed out with increasing vehicle index. All of this is guaranteed even in extreme scenarios for platoons of inhomogeneous vehicles. Furthermore, the controller can be implemented in a decentralized fashion, requiring only local information.

\section{Conclusion}\label{Sec:Concl}

We presented a new decentralized control design for vehicle platoons and have proved that it is able to both guarantee safety and a good traffic flow, and at the same time practical velocity string stability is achieved. The results were illustrated by the simulation of different scenarios. Future research should focus on the relaxation of Assumption~\ref{Ass3}, e.g.\ by allowing the vehicles to have access  to some common information. Furthermore, the simulations exhibit a certain synchronization behavior of the vehicles in the platoon, when the leader velocity remains constant, and it is an open question, whether this can be proved. Earlier results obtained for multi-agent systems with relative degree one~\cite{LeeBerg23} suggest that this is a common phenomenon. Another open question concerns the incorporation of input-constraints, which are always present in real-world applications. It should be investigated under which conditions on the parameters the control is feasible under input constraints, or whether the present approach can be combined with recent funnel control designs as presented e.g.\ in~\cite{Berg24}.

\bibliographystyle{IEEEtran}

\begin{thebibliography}{10}
\providecommand{\url}[1]{#1}
\csname url@samestyle\endcsname
\providecommand{\newblock}{\relax}
\providecommand{\bibinfo}[2]{#2}
\providecommand{\BIBentrySTDinterwordspacing}{\spaceskip=0pt\relax}
\providecommand{\BIBentryALTinterwordstretchfactor}{4}
\providecommand{\BIBentryALTinterwordspacing}{\spaceskip=\fontdimen2\font plus
\BIBentryALTinterwordstretchfactor\fontdimen3\font minus
  \fontdimen4\font\relax}
\providecommand{\BIBforeignlanguage}[2]{{%
\expandafter\ifx\csname l@#1\endcsname\relax
\typeout{** WARNING: IEEEtran.bst: No hyphenation pattern has been}%
\typeout{** loaded for the language `#1'. Using the pattern for}%
\typeout{** the default language instead.}%
\else
\language=\csname l@#1\endcsname
\fi
#2}}
\providecommand{\BIBdecl}{\relax}
\BIBdecl

\bibitem{HedrTomi94}
J.~K. Hedrick, M.~Tomizuka, and P.~Varaiya, ``Control issues in automated
  highway systems,'' \emph{{IEEE} Control Syst. Mag.}, vol.~14, no.~6, pp.
  21--32, 1994.

\bibitem{BonnFrit00}
C.~Bonnet and H.~Fritz, ``Fuel consumption reduction in a platoon:
  {E}xperimental results with two electronically coupled trucks at close
  spacing,'' Tech. Rep., 2000, sAE International, SAE Technical Paper
  2000-01-3056.

\bibitem{LeviAtha66}
W.~Levine and M.~Athans, ``On the optimal error regulation of a string of
  moving vehicles,'' \emph{{IEEE} Trans. Autom. Control}, vol.~11, pp.
  355--361, 1966.

\bibitem{Pepp74}
L.~E. Peppard, ``String stability of relative-motion pid vehicle control
  systems,'' \emph{{IEEE} Trans. Autom. Control}, vol.~19, no.~5, pp. 579--581,
  1974.

\bibitem{varaiya_1993}
P.~Varaiya, ``Smart cars on smart roads: problems of control,'' \emph{{IEEE}
  Trans. Autom. Control}, vol.~38, no.~2, pp. 195--207, 1993.

\bibitem{IoanChie93}
P.~A. Ioannou and C.~C. Chien, ``Autonomous intelligent cruise control,''
  \emph{{IEEE} Trans. Veh. Technol.}, vol.~42, no.~4, pp. 657--672, 1993.

\bibitem{SwarHedr99}
D.~Swaroop and J.~K. Hedrick, ``Constant spacing strategies for platooning in
  automated highway systems,'' \emph{J. Dyn. Syst. Meas. Control}, vol. 121,
  no.~3, pp. 462--470, 1999.

\bibitem{AlamBess15}
A.~Alam, B.~Besselink, V.~Turri, J.~M{\aa}rtensson, and K.~H. Johansson,
  ``Heavy-duty vehicle platooning towards sustainable freight transportation:
  {A} cooperative method to enhance safety and efficiency,'' \emph{{IEEE}
  Control Syst. Mag.}, vol.~35, no.~6, pp. 34--56, 2015.

\bibitem{ploeg_2014b}
J.~Ploeg, D.~P. Shukla, N.~van~de Wouw, and H.~Nijmeijer, ``Controller
  synthesis for string stability of vehicle platoons,'' \emph{{IEEE} Trans.
  Intelligent Transp. Systems}, vol.~15, no.~2, pp. 854--865, 2014.

\bibitem{zheng_2016}
Y.~Zheng, S.~E. Li, J.~Wang, D.~Cao, and K.~Li, ``Stability and scalability of
  homogeneous vehicular platoon: {S}tudy on the influence of information flow
  topologies,'' \emph{{IEEE} Trans. Intelligent Transp. Systems}, vol.~17,
  no.~1, pp. 14--26, 2016.

\bibitem{guanetti_2018}
J.~Guanetti, Y.~Kim, and F.~Borrelli, ``Control of connected and automated
  vehicles: {S}tate of the art and future challenges,'' \emph{Annual Reviews in
  Control}, vol.~45, pp. 18--40, 2018.

\bibitem{ersal_2020}
T.~Ersal, I.~Kolmanovsky, N.~Masoud, N.~Ozay, J.~Scruggs, R.~Vasudevan, and
  G.~Orosz, ``Connected and automated road vehicles: state of the art and
  future challenges,'' \emph{Vehicle System Dynamics}, vol.~58, no.~5, pp.
  672--704, 2020.

\bibitem{Lunz92}
J.~Lunze, \emph{Feedback {C}ontrol of {L}arge {S}cale {S}ystems}, ser.
  International {S}eries in {S}ystems and {C}ontrol {E}ngineering.\hskip 1em
  plus 0.5em minus 0.4em\relax Hemel Hempstead: Prentice-Hall, 1992.

\bibitem{PeteMidd14}
A.~A. Peters, R.~H. Middleton, and O.~Mason, ``Leader tracking in homogeneous
  vehicle platoons with broadcast delays,'' \emph{Automatica}, vol.~50, no.~1,
  pp. 64--74, 2014.

\bibitem{SeilPant04}
P.~Seiler, A.~Pant, and K.~Hedrick, ``Disturbance propagation in vehicle
  strings,'' \emph{{IEEE} Trans. Autom. Control}, vol.~49, no.~10, pp.
  1835--1842, 2004.

\bibitem{dunbar_2006}
W.~B. Dunbar and R.~M. Murray, ``Distributed receding horizon control for
  multi-vehicle formation stabilization,'' \emph{Automatica}, vol.~42, no.~4,
  pp. 549--558, 2006.

\bibitem{CampJia02}
E.~Camponogara, D.~Jia, B.~H. Krogh, and S.~Talukdar, ``Distributed model
  predictive control,'' \emph{{IEEE} Control Syst. Mag.}, vol.~22, no.~1, pp.
  44--52, 2002.

\bibitem{SwarHedr96}
D.~Swaroop and J.~K. Hedrick, ``String stability of interconnected systems,''
  \emph{{IEEE} Trans. Autom. Control}, vol.~41, no.~3, pp. 349--357, 1994.

\bibitem{wijnbergen_2020}
P.~Wijnbergen and B.~Besselink, ``Existence of decentralized controllers for
  vehicle platoons: {O}n the role of spacing policies and available
  measurements,'' \emph{Systems {\&} Control Letters}, vol. 145, p. 104796,
  2020.

\bibitem{NausVugt10}
G.~J.~L. Naus, R.~P.~A. Vugts, J.~Ploeg, M.~J.~G. van~de Molengraft, and
  M.~Steinbuch, ``String-stable {CACC} design and experimental validation: {A}
  frequency-domain approach,'' \emph{{IEEE} Trans. Vehicular Technology},
  vol.~59, no.~9, pp. 4268--4279, 2010.

\bibitem{OencPloe14}
S.~\"Onc\"u, J.~Ploeg, N.~van~de Wouw, and H.~Nijmeijer, ``Cooperative adaptive
  cruise control: Network-aware analysis of string stability,'' \emph{{IEEE}
  Trans. Intelligent Transp. Systems}, vol.~15, no.~4, pp. 1527--1537, 2014.

\bibitem{BessJoha17}
B.~Besselink and K.~H. Johannson, ``String stability and a delay-based spacing
  policy for vehicle platoons subject to disturbances,'' \emph{{IEEE} Trans.
  Autom. Control}, vol.~62, no.~9, pp. 4376--4391, 2017.

\bibitem{besselink_2018}
B.~Besselink and S.~Knorn, ``Scalable input-to-state stability for performance
  analysis of large-scale networks,'' \emph{IEEE Control Systems Letters},
  vol.~2, no.~3, pp. 507--512, 2018.

\bibitem{Sont89a}
E.~D. Sontag, ``Smooth stabilization implies coprime factorization,''
  \emph{{IEEE} Trans. Autom. Control}, vol.~34, no.~4, pp. 435--443, 1989.

\bibitem{FengZhan19}
S.~Feng, Y.~Zhang, S.~E. Li, Z.~Cao, H.~X. Liu, and L.~Li, ``String stability
  for vehicular platoon control: Definitions and analysis methods,''
  \emph{Annu. Rev. Control}, 2019.

\bibitem{IlchRyan02b}
A.~Ilchmann, E.~P. Ryan, and C.~J. Sangwin, ``Tracking with prescribed
  transient behaviour,'' \emph{{ESAIM} Control Optim. Calc. Var.}, vol.~7, pp.
  471--493, 2002.

\bibitem{BergIlch21}
T.~Berger, A.~Ilchmann, and E.~P. Ryan, ``Funnel control of nonlinear
  systems,'' \emph{Math. Control Signals Syst.}, vol.~33, pp. 151--194, 2021.

\bibitem{Hack17}
C.~M. Hackl, \emph{Non-identifier Based Adaptive Control in
  Mechatronics--Theory and Application}, ser. Lecture Notes in Control and
  Information Sciences.\hskip 1em plus 0.5em minus 0.4em\relax Cham,
  Switzerland: Springer-Verlag, 2017, vol. 466.

\bibitem{BergDrue21}
T.~Berger, S.~Drücker, L.~Lanza, T.~Reis, and R.~Seifried, ``Tracking control
  for underactuated non-minimum phase multibody systems,'' \emph{Nonlinear
  Dynamics}, vol. 104, pp. 3671--3699, 2021.

\bibitem{DrueLanz24}
S.~Drücker, L.~Lanza, T.~Berger, T.~Reis, and R.~Seifried, ``Experimental
  validation for the combination of funnel control with a feedforward control
  strategy,'' \emph{Multibody System Dynamics}, 2024.

\bibitem{BergReis14a}
T.~Berger and T.~Reis, ``Zero dynamics and funnel control for linear electrical
  circuits,'' \emph{J. Franklin Inst.}, vol. 351, no.~11, pp. 5099--5132, 2014.

\bibitem{SenfPaug14}
A.~Senfelds and A.~Paugurs, ``Electrical drive {DC} link power flow control
  with adaptive approach,'' in \emph{Proc. 55th Int. Sci. Conf. Power Electr.
  Engg.}, 2014, pp. 30--33.

\bibitem{PompWeye15}
A.~Pomprapa, S.~Weyer, S.~Leonhardt, M.~Walter, and B.~Misgeld, ``Periodic
  funnel-based control for peak inspiratory pressure,'' in \emph{Proc. 54th
  {IEEE} Conf. Decis. Control}, 2015, pp. 5617--5622.

\bibitem{BergPuch22}
T.~Berger, M.~Puche, and F.~L. Schwenninger, ``Funnel control for a moving
  water tank,'' \emph{Automatica}, vol. 135, p. Article 109999, 2022.

\bibitem{BergRaue18}
T.~Berger and A.-L. Rauert, ``A universal model-free and safe adaptive cruise
  control mechanism,'' in \emph{Proceedings of the MTNS 2018}, Hong Kong, 2018,
  pp. 925--932.

\bibitem{BergRaue20}
------, ``Funnel cruise control,'' \emph{Automatica}, vol. 119, p. Article
  109061, 2020.

\bibitem{verginis_2018}
C.~K. Verginis, C.~P. Bechlioulis, D.~V. Dimarogonas, and K.~J. Kyriakopoulos,
  ``Robust distributed control protocols for large vehicular platoons with
  prescribed transient and steady-state performance,'' \emph{IEEE Transactions
  on Control Systems Technology}, vol.~26, no.~1, pp. 299--304, 2018.

\bibitem{GuoLi19}
G.~Guo and D.~Li, ``Adaptive sliding mode control of vehicular platoons with
  prescribed tracking performance,'' \emph{IEEE Transactions on Vehicular
  Technology}, vol.~68, no.~8, pp. 7511--7520, 2019.

\bibitem{ArmsDupo94}
B.~Armstrong-H{\'e}louvry, P.~E. Dupont, and C.~Canudas-de Wit, ``A survey of
  models, analysis tools and compensation methods for the control of machines
  with friction,'' \emph{Automatica}, vol.~30, no.~7, pp. 1083--1138, 1994.

\bibitem{LeinNijm04}
R.~I. Leine and H.~Nijmeijer, \emph{Dynamics and bifurcations of non-smooth
  mechanical systems}, ser. Lecture notes in applied and computational
  mechanics.\hskip 1em plus 0.5em minus 0.4em\relax Berlin-Heidelberg:
  Springer-Verlag, 2004, vol.~18.

\bibitem{Ilch13}
A.~Ilchmann, ``Decentralized tracking of interconnected systems,'' in
  \emph{Mathematical System Theory - Festschrift in Honor of Uwe Helmke on the
  Occasion of his Sixtieth Birthday}, K.~H\"{u}per and J.~Trumpf, Eds.\hskip
  1em plus 0.5em minus 0.4em\relax CreateSpace, 2013, pp. 229--245.

\bibitem{Walt98}
W.~Walter, \emph{Ordinary Differential Equations}.\hskip 1em plus 0.5em minus
  0.4em\relax New York: Springer-Verlag, 1998.

\bibitem{AstrMurr08}
K.~J. {\AA}str{\"o}m and R.~M. Murray, \emph{Feedback Systems: An Introduction
  for Scientists and Engineers}.\hskip 1em plus 0.5em minus 0.4em\relax
  Princeton, NJ: Princeton University Press, 2008.

\bibitem{LeeBerg23}
J.~G. Lee, T.~Berger, S.~Trenn, and H.~Shim, ``Edge-wise funnel output
  synchronization of heterogeneous agents with relative degree one,''
  \emph{Automatica}, vol. 156, p. Article 111204, 2023.

\bibitem{Berg24}
T.~Berger, ``Input-constrained funnel control of nonlinear systems,''
  \emph{{IEEE} Trans. Autom. Control}, vol.~69, no.~8, pp. 5368--5382, 2024.

\end{thebibliography}



\begin{IEEEbiography}[{\includegraphics[width=1.05in,clip,keepaspectratio]{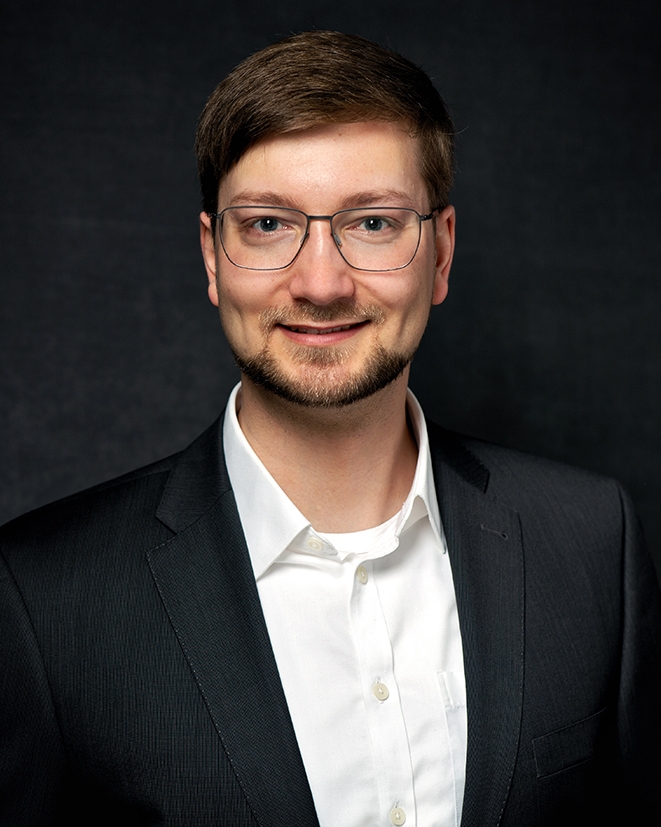}}]{Thomas Berger} was born in Germany in 1986. He received his B.Sc. (2008), M.Sc. (2010), and Ph.D. (2013), all in Mathematics and from Technische Universit\"at Ilmenau, Germany. From 2013 to 2018 Dr. Berger was a postdoctoral researcher at the Department of Mathematics, Universit\"at Hamburg, Germany. Since January 2019 he is a Juniorprofessor at the Institute for Mathematics, Universit\"at Paderborn, Germany. His research interest encompasses adaptive control, optimization-based control, differential–algebraic systems and multibody dynamics.

In 2024, Dr. Berger received a Heisenberg grant from the German Research Foundation (DFG). For his exceptional scientific achievements in the field of Applied Mathematics and Mechanics, he received the ``Richard-von-Mises Prize 2021'' of the International Association of Applied Mathematics and Mechanics (GAMM). Dr. Berger further received several awards for his dissertation, including the ``2015 European Ph.D. Award on Control for Complex and Heterogeneous Systems'' from the European Embedded Control Institute and the ``Dr.-Körper-Preis 2015'' from the GAMM. He serves as an Associate Editor for Mathematics of Control, Signals, and Systems, the IMA Journal of Mathematical Control and Information and the DAE Panel, and as a Review Editor for Frontiers in Control Engineering.
\end{IEEEbiography}

\begin{IEEEbiography}[{\includegraphics[width=1.05in,clip,keepaspectratio]{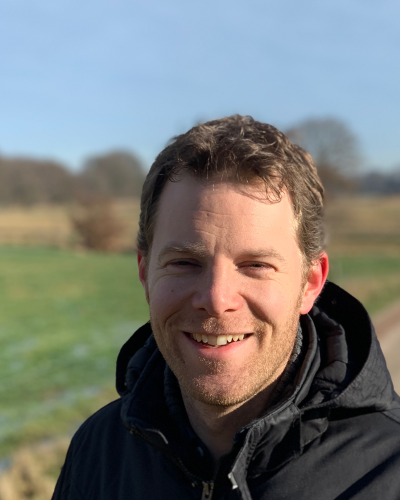}}]{Bart Besselink} received the M.Sc. (cum laude) degree in mechanical engineering in 2008 and the Ph.D. degree in 2012, both from Eindhoven University of Technology, Eindhoven, The Netherlands. Since 2016, he has been with the Bernoulli Institute for Mathematics, Computer Science and Artificial Intelligence, University of Groningen, Groningen, The Netherlands, where he is currently an associate professor. He was a short-term Visiting Researcher with the Tokyo Institute of Technology, Tokyo, Japan, in 2012. Between 2012 and 2016, he was a Postdoctoral Researcher with the ACCESS Linnaeus Centre and Department of Automatic Control, KTH Royal Institute of Technology, Stockholm, Sweden.

His main research interests are on mathematical systems theory for large-scale interconnected systems, with emphasis on contract-based verification and control, model reduction, and applications in intelligent transportation systems and neuromorphic computing. He is a recipient (with Xiaodong Cheng and Jacquelien Scherpen) of the 2020 Automatica Paper Prize.
\end{IEEEbiography}

\end{document}